\newtheorem{theorem}{Theorem}[section]
\theoremstyle{definition}
\theoremstyle{remark}
\numberwithin{equation}{section}
\begin{document}

\title[On Grothendieck type duality]
{On the Grothendieck duality for the space of holomorphic Sobolev functions }

\author[A. Levskii]{Arkadii Levskii}
\email{learkasha03@gmail.com}

\author[A. Shlapunov]{Alexander Shlapunov}
\email{ashlapunov@sfu-kras.ru}

\address{Siberian Federal University,
         Institute of Mathematics and Computer Science,
         pr. Svobodnyi 79,
         660041 Krasnoyarsk,
         Russia}

\subjclass [2010] {Primary 46A20}

\keywords{duality theorems, holomorphic functions of finite order of growth}

\begin{abstract}
We describe the strong dual space $({\mathcal O}^s (D))^*$ for the space ${\mathcal O}^s (D) =  
 H^s (D) \cap {\mathcal O} (D)$ of holomorphic functions from the Sobolev space $H^s(D)$, $s \in \mathbb Z$, over 
a bounded simply connected plane domain $D$ with infinitely differential boundary $\partial D$. 
We identify the dual space with the space of holomorhic functions on  ${\mathbb C}^n\setminus \overline D$ that 
belong to  $H^{1-s} (G\setminus \overline D)$ for any bounded domain $G$, containing the compact 
$\overline D$, and vanish at the infinity. As a corollary, we obtain a description of the strong dual 
space $({\mathcal O}_F (D))^*$ for the space ${\mathcal O}_F (D)$ of holomorphic functions of 
 finite order of growth in $D$ (here, ${\mathcal O}_F (D)$ is endowed with the inductive limit 
topology with respect to the family of  spaces ${\mathcal O}^s (D)$, $s \in \mathbb Z$). 
 In this way we extend  the classical  Grothendieck-K{\"o}the-Sebasti\~{a}o e Silva 
duality for the space of holomorphic functions.
\end{abstract}

\maketitle

One of the first dualities in the spaces of holomprphic functions was discovered in 
1950-'s independently by A.Grothendieck \cite{Grot2}, 
G. K{\"o}the  \cite{Koth2} and  J. Sebasti\~{a}o e Silva \cite{Silva}, 
who described the strong dual  $({\mathcal O} (D))^*$ for the space of holomorphic functions 
${\mathcal O} (D)$ (endowed with the standard Frech\'et topology) 
in a bounded simply connected domain $D\subset {\mathbb C}$:
\begin{equation} \label{eq.dual.compl}
({\mathcal O} (D))^* \cong {\mathcal O} (\hat {\mathbb C} \setminus D), 
\end{equation} 
where ${\mathcal O} (\hat {\mathbb C} \setminus D)$ is the space of holomorphic 
functions on neighborhoods of the  closed set ${\mathbb C} \setminus D$, vanishing at 
the infinity, endowed with the standard  inductive limit topology of holomorphic functions 
on closed sets. One of the most general results, describing the duality for the spaces 
of solutions to elliptic differential operators with the topology of uniform 
convergence on compact sets,  belong to A. Grothendieck, see  \cite[Theorems 3 and 4]{Grot1}; 
it is similar in a way to \eqref{eq.dual.compl}. Another general scheme of producing dualities for 
(both determined and overdetermined) elliptic systems  was presented in \cite{ShTaDBRK}. It involves the concept of 
Hilbert space with reproducing kernel and the constructed pairings are closely related to the inner 
products of the used Hilbert spaces. However it works easily for formally self-adjoint strongly elliptic 
operators, while in general case the application of the scheme  depends on the 
very subtle information regarding  the properties of the reproducing kernel that is not 
always at hands. Actually, similar results (with the use of classical Bergmann reproducing kernel and pairing 
induced by the inner product of the Lebesgue space $L^2 (D)$) was obtained 
by E. Straube \cite[\S 3]{Str} for harmonic and holomorphic functions of finite order of growth 
of many complex variables. Paper \cite{ShTaDBRK} contains also description of a Grothendick type duality for spaces of 
solutions of finite order of growth to strongly elliptic systems. 

In the present short note we describe a Grothendick 
type duality  for the spaces ${\mathcal O}^s (D)$ of holomorphic Sobolev functions and ${\mathcal O}_F (D)$ of holomorphic functions of finite order of growth over a bounded 
simply connected plane domain $D$ with infinitely differential boundary $\partial D$ 
with the use the pairing induced by the inner product of the Lebesgue space $L^2 (\partial D)$. 

\section{Duality for the space of Sobolev holomorphic functions}
\label{s.1} 

Let $L^2 (D)$ be the Lebesgue space and 
$H^s (D)$, $s \in \mathbb N$, be the Sobolev space of functions over plane domain $D$, endowed with 
the standard inner products. As it is known the scale extends to all values $s>0$, as the  
Sobolev-Slobodetskii scale. We denote by $H^{-s} (D)$, $s >0$, the strong dual for the space 
$H^{s}_0 (D)$ (i.e. for the closure of smooth functions with compact support in $D$ in $H^{s}_0 (D)$); 
the related pairing between elements of  $H^{-s} (D)$ and $H^{s} (D)$ is induced by the inner product 
in the Lebesgue space $L^2 (D)$. Denote by $h(D)$ the space of harmonic functions in $D$,  set  
$h^s(s) = H^s(D) \cap h(D)$ and, similarly, ${\mathcal O}^s(s) = H^s(D) \cap {\mathcal O}(D)$, 
$s\in \mathbb Z$,  where   ${\mathcal O}(D)$ 
the space of holomorhic functions in $D$. By the standard a priori estimates for harmonic functions, 
 $h^s(s)$ and ${\mathcal O}^s(s)$ are closed subspaces of $H^s(D)$, $s\in \mathbb Z$, see, for instance, 
\cite[p. 568]{Str}. We note 
that a holomorphic function is harmonic and then ${\mathcal O}^s (D)$ is a closed subspace 
in $h^s (D)$. According to \cite[Corollary 1.7]{Str}, any element $u \in 
h^s (D)$ has a weak boundary value $u_{|\partial D}$ on $\partial D$ 
belonging to $H^{s-1/2} (\partial D)$, $s\in \mathbb Z$. Of course, $u_{|\partial D}$ coincides  
with the usual trace of $u$ on $\partial D$ if $s\in \mathbb N$.  
It tollows immediately from \cite[Corollary 1.7]{Str}
that for each $u\in {h}^s (D)$ the 
functional $\|u_{|\partial D} \|_{H^{s-1/2} (\partial D)}$ defines a norm on 
$h^s (D)$ that is equivalent to the standard one. As 
${\mathcal O}^s (D) \subset h^s (D)$, we  prefer to endow 
${\mathcal O}^s (D)$ with the norm $\|u_{|\partial D} \|_{H^{s-1/2} (\partial D)}$. 

In  any case,  ${\mathcal O}^s(s)$, $s\in \mathbb Z$,  is a Hilbert space (because 
$\|\cdot \|_{H^{s-1/2} (\partial D)}$ posesses parallelogram property) and we immediately have the standard 
Riesz duality with the pairing related to the corresponding inner product:
\begin{equation} \label{eq.dual.Riesz}
({\mathcal O}^s (D))^* \cong {\mathcal O}^s (D).
\end{equation}
However we want to produce a Grothendieck type duality for ${\mathcal O}^s (D)$. With this purpose, denote by 
$ {\mathcal O}^s (\hat {\mathbb C} \setminus \overline D)$, $s\in \mathbb Z$, the space of 
holomorphic functions in ${\mathbb C} \setminus \overline D$ vanishing at the  infinity that 
belong to  $H^s (G\setminus \overline D)$ for any bounded domain $G$, containing the compact 
$\overline D$.  By the discussion above, any element $v \in 
{\mathcal O}^{s} (\hat {\mathbb C} \setminus \overline D)$ has a weak boundary value $v_{|\partial D}$ 
on $\partial D$ belonging to $H^{s-1/2} (\partial D)$. Then, taking into the account the connection 
between the interior and exterior Dirichlet problems for the Laplace operator, for each 
$v\in {\mathcal O}^{s} (\hat {\mathbb C} \setminus \overline D)$  
functional $\|v_{|\partial D} \|_{H^{s-1/2} (\partial D)}$ defines a norm on 
${\mathcal O}^s (\hat {\mathbb C} \setminus \overline D)$ and, by the discussion above, 
${\mathcal O}^{s} (\hat {\mathbb C} \setminus \overline D)$ is Hilbert space.

\begin{theorem} \label{t.compl.s}
Let $D$ be a bounded simply connected domain with $C^\infty$-smooth boundary. Then for 
each $s\in \mathbb Z$ we have an isomorphism of Banach spaces:
\begin{equation} \label{eq.dual.compl.s}
({\mathcal O}^s (D))^* \cong {\mathcal O}^{1-s} (\hat {\mathbb C} \setminus \overline D).
\end{equation}
\end{theorem}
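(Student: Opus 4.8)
The plan is to pass from the holomorphic functions to their boundary values and thereby reduce the statement to a duality computation on $\partial D$. By the choice of norms described above, the weak boundary-value map $u\mapsto u_{|\partial D}$ is an isometric isomorphism of ${\mathcal O}^s(D)$ onto a closed subspace $B^{int}_s\subset H^{s-1/2}(\partial D)$, and likewise $v\mapsto v_{|\partial D}$ identifies ${\mathcal O}^{1-s}(\hat{\mathbb C}\setminus\overline D)$ isometrically with a closed subspace $B^{ext}_{1-s}\subset H^{1/2-s}(\partial D)$. I would first record the Cauchy--Plemelj decomposition: on the $C^\infty$ curve $\partial D$ the singular Cauchy integral is a pseudodifferential operator of order $0$, so the associated Cauchy transforms are bounded complementary projections on every $H^r(\partial D)$ and yield a topological direct sum $H^r(\partial D)=B^{int}_{r+1/2}\oplus B^{ext}_{r+1/2}$, the two summands being exactly the boundary values of functions holomorphic inside $D$, respectively outside $\overline D$ and vanishing at infinity. (Directness holds because a function holomorphic on all of $\hat{\mathbb C}$ and vanishing at infinity is $0$.)

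Next I introduce the pairing. The natural candidate is the bilinear Cauchy pairing $\langle u,v\rangle=\frac{1}{2\pi i}\oint_{\partial D}u(\zeta)v(\zeta)\,d\zeta=\frac{1}{2\pi i}\int_{\partial D}u\,(v\,\tau)\,ds$, where $\tau$ is the unit tangent; this is precisely the Sobolev-scale extension of the classical Grothendieck--K{\"o}the--Sebasti\~ao e Silva pairing, and it realizes the duality between $H^{s-1/2}(\partial D)$ and $H^{1/2-s}(\partial D)$ furnished by the $L^2(\partial D)$ inner product. Continuity, $|\langle u,v\rangle|\le C\,\|u\|_{{\mathcal O}^s(D)}\|v\|_{{\mathcal O}^{1-s}(\hat{\mathbb C}\setminus\overline D)}$, is immediate from this Sobolev duality together with the fact that multiplication by the smooth function $\tau$ preserves $H^{1/2-s}(\partial D)$. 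Hence $v\mapsto F_v:=\langle\,\cdot\,,v\rangle$ is a bounded map ${\mathcal O}^{1-s}(\hat{\mathbb C}\setminus\overline D)\to({\mathcal O}^s(D))^*$, and the task is to show it is a bijection.

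The core is the computation of the annihilator $(B^{int}_s)^\perp\subset H^{1/2-s}(\partial D)$ of the interior boundary values, since $({\mathcal O}^s(D))^*\cong (B^{int}_s)^*\cong H^{1/2-s}(\partial D)/(B^{int}_s)^\perp$. I claim $(B^{int}_s)^\perp=B^{int}_{1-s}$. The inclusion $B^{int}_{1-s}\subseteq(B^{int}_s)^\perp$ is Cauchy's theorem: if $u\in{\mathcal O}^s(D)$ and $w\in{\mathcal O}^{1-s}(D)$, then $uw$ is holomorphic in $D$, so $\oint_{\partial D}uw\,d\zeta=0$. For the reverse inclusion I would take $g\in(B^{int}_s)^\perp$, split it by the Cauchy decomposition as $g=g^{int}+g^{ext}$ with $g^{int}\in B^{int}_{1-s}$ and $g^{ext}\in B^{ext}_{1-s}$; the first inclusion gives $g^{int}\in(B^{int}_s)^\perp$, whence $g^{ext}\in(B^{int}_s)^\perp$ as well, and it remains to prove that the pairing $B^{int}_s\times B^{ext}_{1-s}\to\mathbb C$ is non-degenerate. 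This is the crux and where holomorphy enters decisively: testing the exterior function $v$ (with $g^{ext}=v_{|\partial D}$) against the polynomials $z^n\in{\mathcal O}^s(D)$ and deforming the contour into the exterior recovers, via the Laurent expansion $v(z)=\sum_{k\ge 1}c_kz^{-k}$ at infinity, the coefficients $\langle z^n,v\rangle=c_{n+1}$; their vanishing for all $n\ge 0$ forces $v\equiv 0$. Thus $g^{ext}=0$ and $(B^{int}_s)^\perp=B^{int}_{1-s}$.

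With this, the direct-sum decomposition $H^{1/2-s}(\partial D)=B^{int}_{1-s}\oplus B^{ext}_{1-s}$ gives $({\mathcal O}^s(D))^*\cong H^{1/2-s}(\partial D)/B^{int}_{1-s}\cong B^{ext}_{1-s}\cong{\mathcal O}^{1-s}(\hat{\mathbb C}\setminus\overline D)$, and unwinding the identifications shows the resulting isomorphism is exactly $v\mapsto F_v$; that it is a topological (Banach-space) isomorphism then follows from the open mapping theorem. I expect the main obstacle to be the boundary analysis underlying the first paragraph --- the bounded Cauchy--Plemelj splitting of $H^r(\partial D)$ on the full integer scale, including the negative orders where boundary values are understood weakly in the sense of Straube's Corollary~1.7 --- together with the non-degeneracy step above; the continuity estimate and the final quotient identification should then be routine.
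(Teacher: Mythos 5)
Your proposal is correct and is essentially the paper's own proof in different packaging: the Cauchy--Plemelj splitting of $H^r(\partial D)$ that you postulate is exactly what the paper obtains from the boundedness of the Cauchy transform on the Sobolev scale (Rempel--Schulze) together with the weak jump theorem, your annihilator/quotient computation (Cauchy's theorem for one inclusion, Hahn--Banach duality of a closed subspace for the representation) mirrors the paper's surjectivity argument, which splits the representing density $v_0$ into its interior and exterior Cauchy transforms, and your non-degeneracy step is the paper's injectivity argument with polynomials $z^n$ and Laurent coefficients at infinity in place of testing against the Cauchy kernels $(\zeta-z)^{-1}$, $z\notin\overline D$. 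The only other deviation --- the bilinear pairing $\frac{1}{2\pi i}\oint_{\partial D} uv\,d\zeta$ instead of the paper's sesquilinear pairing induced by the $L^2(\partial D)$ inner product --- is cosmetic, since the two differ by conjugation and by multiplication by the smooth nonvanishing tangent factor, which preserves every $H^{r}(\partial D)$.
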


\begin{proof} We begin with the description of the related pairing. 
First, we note that since $\partial D$ is a compact, then $H^{s'} (\partial D) = H^{s'} _0 (\partial D) $ 
for each $s'\in \mathbb R$. Hence there is a natural duality 
\begin{equation} \label{eq.dual.boundary}
H^{-s'} (\partial D)  \cong H^{s'} (\partial D), \, s'\in {\mathbb R},
\end{equation} 
with the pairing 
$$
\langle \cdot , \cdot \rangle_{\partial D,s'}: H^{-s'} (\partial D)  \times H^{s'} (\partial D) \to {\mathbb C},
$$ 
induced by the inner product in $L^2 (\partial D)$. In particular, 
\begin{equation} \label{eq.pairing.est}
|\langle u , v \rangle_{\partial D,s'} | \leq \|u\|_{H^{s'} (\partial D)} 
\|v\|_{H^{-s'} (\partial D)} 
\mbox{ for all }  v \in H^{-s'} (\partial D), u\in H^{s'} (\partial D).
\end{equation}
For the sake of notations we  drop the index $s'$ in the pairing.

Thus, for each $s\in {\mathbb Z}$ we  obtain a natural 
pairing 
\begin{equation} \label{eq.pairing}
\langle u_{|\partial D} , v_{|\partial D} \rangle_{\partial D}: {\mathcal O}^{s} (D)  \times 
{\mathcal O}^{1-s} (\hat {\mathbb C} \setminus \overline D) \to {\mathbb C},
\end{equation}
inducing a continuous (conjugate-) linear mapping 
\begin{equation} \label{eq.mapping}
{\mathcal O}^{1-s} (\hat {\mathbb C} \setminus \overline D) \ni v \to f_v \in ({\mathcal O}^{s} (D))^*,
\,\, f_v (u) = \langle u_{|\partial D} , v_{|\partial D} \rangle_{\partial D}.
\end{equation}
As \eqref{eq.dual.boundary} is an isomorphism of normed spaces, we see that 
$$\|f_v\|_{({\mathcal O}^{s} (D))^*} = 
\|v_{|\partial D}\|_{H^{s-1/2} (\partial D)}.
$$ 
Now we note that the integral Cauchy formula may be extended to the elements of 
the space ${\mathcal O}^{s} (D)$ with the use of the notion of the weak boundary values.
Namely, for a distribution $u_0 \in H^{s-1/2} (\partial D)$ denote by $K u_0$ its  
 integral Cauchy transform:
$$
(K u_0) (z) = \frac{1}{2\pi \iota }\langle (\cdot-z)^{-1}, \overline u_0  \rangle_{\partial D} , \,\, z \not \in \partial D,
$$
where $\iota $ is the imaginary unit. Of course, $K u_0 (z)$ is just the Cauchy integral for $u_0$ 
if $s\in \mathbb N$. Then for any $u \in {\mathcal O}^{s} (D)$ we have 
\begin{equation}\label{eq.Cauchy.D}
(K u_{|\partial D}) (z)=\left\{
\begin{array}{lll} 
0 , & z\not \in \overline D, \\
u(z) & z \in D;
\end{array}
\right.
\end{equation}
see, for instance, 
\cite{AKy}, \cite{Ch} 
even  for the Martinelli-Bochner integral in ${\mathbb C}^n$. 
Similarly, taking into the account the behaviour at the infinity and the orientation of the curve 
$\partial D$, for elements $v 
\in {\mathcal O}^{1-s} (\hat {\mathbb C} \setminus \overline D)$ we have 
\begin{equation}\label{eq.Cauchy.compl}
- (K v_{|\partial D}) (z)=\left\{
\begin{array}{lll} 
0 , & z\in D, \\
v(z) & z \not \in \overline D.
\end{array}
\right.
\end{equation}
It follows from \eqref{eq.Cauchy.compl}  that if $f_v (u) = 0 $ for all $u \in {\mathcal O}^{s} (D)$ 
then, as the kernel $(\zeta-z)^{-1}$ is holomorphic in $D$ with respect to $\zeta$ for all $z \not \in 
\overline D$,  we conclude that 
$$
0= \langle (\cdot - z )^{-1} , v_{|\partial D} \rangle_{\partial D} =2\pi \iota \, 
(K v_{|\partial D}) (z) = 2\pi \iota \, v(z) 
\mbox{ for all } z \not \in \overline D,
$$
 i.e. mapping \eqref{eq.mapping} is injective.

To finish the proof we have to show that mapping \eqref{eq.mapping} is surjective.  As we noted above,
the space ${\mathcal O}^{s} (D)$  can be treated as a closed subset of the Hilbert space $H^{s-1/2} (\partial D)$.
Then, by Hahn-Banach theorem and Riesz theorem on functionals on Hilbert spaces, for any functional 
$f\in ({\mathcal O}^{s} (D))^*$ there is a function $v_0 = v_0 (f)
\in H^{1/2-s} (\partial D)$ such that 
$$
f(u) = \langle u _{|\partial D}, v_0 \rangle_{\partial D} \mbox{ for all } u \in {\mathcal O}^{s} (D).
$$
Next, denote by $(Kv_0)^-$ the restriction of the integral Cauchy transform to $D$ and 
$(Kv_0)^+$ its restriction to ${\mathbb C} \setminus \overline D$. Then theorems on the boundedness 
of potentials, see \cite[\S 2.3.2.5]{RS}, and the structure of the Cauchy kernel 
yield $(Kv_0)^- \in  {\mathcal O}^{1-s} (D)$, $(Kv_0)^+ \in  {\mathcal O}^{1-s} (
\hat {\mathbb C} \setminus \overline  D)$. Now, by the weak jump theorem of the Cauchy transform, 
see \cite{Ch}, 
we have in the sense of weak boundary values:
$$
(Kv_0)^- _{|\partial D} - (Kv_0)^+ _{|\partial D} = v_0 \mbox{ on } \partial D.
$$
Clearly, by the definition of the  weak boundary values and the classical Cauchy theorem, we have 
$$
\langle u _{|\partial D}, (Kv_0)^- _{|\partial D} \rangle_{\partial D} =0\mbox{ for all } u \in {\mathcal O}^{s} (D).
$$
Therefore
$$
f(u) = - \langle u_{|\partial D} , (Kv_0)^+ _{|\partial D} \rangle_{\partial D} 
\mbox{ for all } u \in {\mathcal O}^{s} (D), 
$$
and then mapping \eqref{eq.mapping} is surjective (i.e. $v = v(f) =-(Kv_0)^+ \in 
{\mathcal O}^{1-s} (\hat {\mathbb C} \setminus \overline  D))$.
\end{proof}

\section{Holomorphic functions of finite order of growth}
\label{s.2} 

One says that a  function $u \in h(D)$ has a finite order of growth near $\partial D$, 
if for each point $z_0 \in \partial D$ there are positive numbers $\gamma$, $C$ and $R$ such that 
$$
|u (z) | \leq C |z-z_0|^{-\gamma} \mbox{ for all } z \in D, |z-z_0|<R. 
$$
The space of such functions we denote by $h_F (D)$. E. Straube \cite{Str} proved that  
$$
h_F (D) = \cup_{s\in {\mathbb Z}}\, h^s (D)
$$ 
and hence we may endow the space with the inductive limit topology with respect to the family 
$\{h^s (D)\}_{s\in {\mathbb Z}}$ of Banach spaces, see, for instance, \cite[\S 6]{Shaef}. Again, as 
${\mathcal O} (D) \subset h(D)$,  we obtain 
\begin{equation} \label{eq.space.F} 
{\mathcal O}_F (D) = 
\cup_{s\in {\mathbb Z}} \, {\mathcal O}^s (D) ;
\end{equation}
we endow this space of holomorphic functions 
of finite order of growth near $\partial D$ with the same topology as $h_F (D)$. 
According to \cite[Ch. 4, Exercise 24e]{Shaef}, ${\mathcal O}_F (D)$ is a ${\rm DF}$-space and then 
its dual is expected to be a Fr\'echet space, see \cite[Ch. 4, Exercise 24a]{Shaef}. Thus, we denote by 
${\mathcal O} (\hat {\mathbb C} \setminus \overline D)$ the space of holomorphic functions 
in ${\mathbb C} \setminus \overline D$ vanishing at the infinity. 
By the Sobolev Embedding Theorem, 
\begin{equation} \label{eq.space.FF}
{\mathcal O} (\hat {\mathbb C} \setminus \overline D) \cap C^\infty ({\mathbb C} \setminus  D)= \cap_{s\in \mathbb Z} 
{\mathcal O}^s (\hat {\mathbb C} \setminus \overline D) .
\end{equation}  
We endow the space 
with the projective limit topology with respect to the family 
$\{ {\mathcal O}^{s} (\hat {\mathbb C} \setminus \overline D) \}_{s \in {\mathbb Z}}$ of the Banach spaces, 
see \cite[Ch. I, \S 5]{Str}.  Thus,  
${\mathcal O} (\hat {\mathbb C} \setminus \overline D)$ is a Fr\'echet space, see 
\cite[Ch. II, \S 6, Corollary 1]{Str}.  
 
\begin{theorem} \label{t.compl.F}
Let $D$ be a bounded simply connected domain with $C^\infty$-smooth boundary. Then 
 we have a topological isomorphism: 
\begin{equation} \label{eq.dual.compl.F}
({\mathcal O}_F (D))^* \cong {\mathcal O} (\hat {\mathbb C} \setminus \overline D) 
\cap C^\infty ({\mathbb C} \setminus D).
\end{equation}
\end{theorem}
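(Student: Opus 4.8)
The plan is to reduce the statement to Theorem~\ref{t.compl.s} by invoking the duality between inductive and projective limits of Banach spaces. By \eqref{eq.space.F} the space ${\mathcal O}_F(D) = \varinjlim_{s} {\mathcal O}^s(D)$ is the inductive limit of the Banach spaces ${\mathcal O}^s(D)$ over decreasing $s \in \mathbb Z$, the inclusion ${\mathcal O}^s(D) \hookrightarrow {\mathcal O}^{s'}(D)$ being continuous for $s > s'$. Since the chosen norm on ${\mathcal O}^s(D)$ is $\|u_{|\partial D}\|_{H^{s-1/2}(\partial D)}$, this inclusion is induced by the Sobolev embedding $H^{s-1/2}(\partial D) \hookrightarrow H^{s'-1/2}(\partial D)$ on the compact curve $\partial D$, which is \emph{compact} by the Rellich lemma. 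Hence ${\mathcal O}_F(D)$ is a Silva space (a ${\rm (DFS)}$-space), and its strong dual is the Fr\'echet--Schwartz projective limit of the strong duals, a topological isomorphism requiring no further regularity checks.

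With this in hand I would carry out the identification step by step. A linear functional on the inductive limit is continuous precisely when its restriction to every ${\mathcal O}^s(D)$ is continuous, so that algebraically $({\mathcal O}_F(D))^* = \varprojlim_s ({\mathcal O}^s(D))^*$, the linking maps being the transposes of the inclusions above. Inserting the isomorphism $({\mathcal O}^s(D))^* \cong {\mathcal O}^{1-s}(\hat{\mathbb C}\setminus\overline D)$ of Theorem~\ref{t.compl.s}, realized through the pairing \eqref{eq.pairing}, the transpose of ${\mathcal O}^s(D) \hookrightarrow {\mathcal O}^{s'}(D)$ becomes the inclusion ${\mathcal O}^{1-s'}(\hat{\mathbb C}\setminus\overline D) \hookrightarrow {\mathcal O}^{1-s}(\hat{\mathbb C}\setminus\overline D)$; the crucial point is that these identifications are mutually compatible along the scale, since every pairing is induced by the single inner product of $L^2(\partial D)$. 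Therefore the projective limit collapses to the intersection $\bigcap_{t\in\mathbb Z}{\mathcal O}^t(\hat{\mathbb C}\setminus\overline D)$, which by \eqref{eq.space.FF} is exactly ${\mathcal O}(\hat{\mathbb C}\setminus\overline D)\cap C^\infty({\mathbb C}\setminus D)$ endowed with its projective-limit topology.

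I expect the genuinely delicate point to be the verification that the two topologies agree, rather than the algebraic identity. The algebraic description $({\mathcal O}_F(D))^* = \bigcap_t {\mathcal O}^t(\hat{\mathbb C}\setminus\overline D)$ follows at once from the universal property of the inductive limit, but to upgrade it to a topological isomorphism one must confirm that the strong dual topology is generated by the seminorms $v \mapsto \|v_{|\partial D}\|_{H^{t-1/2}(\partial D)}$, $t \in \mathbb Z$. Here the ${\rm (DFS)}$ structure established in the first step does the work: for a Silva space the equicontinuous subsets of the dual are precisely the bounded subsets of the individual dual Banach spaces, so the strong topology coincides stepwise with the projective-limit topology, in agreement with the ${\rm DF}$--Fr\'echet duality already recorded through \cite[Ch.~4, Exercise~24]{Shaef}. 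This yields the topological isomorphism \eqref{eq.dual.compl.F}.
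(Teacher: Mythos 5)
Your proof is correct, and it takes a genuinely different route from the paper's. The paper treats the limit spaces directly: it defines the pairing \eqref{eq.pairing.F} on ${\mathcal O}_F(D)\times\bigl({\mathcal O}(\hat{\mathbb C}\setminus\overline D)\cap C^\infty({\mathbb C}\setminus D)\bigr)$, establishes injectivity and surjectivity of $v\mapsto f_v$ by re-running the Cauchy-transform arguments from the proof of Theorem \ref{t.compl.s}, and then obtains continuity of the inverse from the Closed Graph Theorem for Fr\'echet spaces, using that ${\mathcal O}_F(D)$ is a ${\rm DF}$-space whose strong dual is Fr\'echet \cite{Shaef}. You instead use Theorem \ref{t.compl.s} as a black box and shift all the weight onto abstract limit duality: the compactness of the linking maps (Rellich on the compact curve $\partial D$, transported through the boundary norms $\|u_{|\partial D}\|_{H^{s-1/2}(\partial D)}$) makes ${\mathcal O}_F(D)$ a Silva (${\rm DFS}$) space, so its strong dual is topologically the projective limit of the Banach duals; the level-wise identifications of Theorem \ref{t.compl.s} are mutually compatible because every pairing is the single $L^2(\partial D)$ pairing \eqref{eq.pairing}, and the injectivity statement of Theorem \ref{t.compl.s} forces any compatible family $(v_s)$ to consist of one and the same function, collapsing the projective limit to the intersection \eqref{eq.space.FF}. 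Your route buys: no second pass through the Cauchy transform, no Closed Graph Theorem, and the extra structural information (not recorded in the paper) that ${\mathcal O}_F(D)$ is ${\rm DFS}$, hence reflexive, with Fr\'echet--Schwartz dual. The paper's route buys economy of machinery: it needs neither compactness of embeddings nor Silva duality, only the soft ${\rm DF}$/Fr\'echet observation plus the Closed Graph Theorem. One phrasing of yours is loose: equicontinuous subsets of the dual of an LB-space are those whose restrictions to each step are bounded in the corresponding Banach dual, not ``bounded subsets of the individual dual Banach spaces'' per se; but the fact you actually invoke --- that for ${\rm DFS}$ spaces the strong dual topology coincides with the projective-limit topology of the duals --- is standard Silva duality and correct.
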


\begin{proof} It follows almost immediately from Theorem \ref{t.compl.F}. Indeed, 
as $v_{|\partial D} \in C^\infty (\partial D)$ for each ${\mathcal O} (\hat {\mathbb C} \setminus \overline D) 
\cap C^\infty ({\mathbb C} \setminus D)$, formulae \eqref{eq.space.F} and \eqref{eq.space.FF} imply that 
\eqref{eq.pairing} defines a sesquilinear pairing
\begin{equation} \label{eq.pairing.F}
\langle u_{|\partial D} , v_{|\partial D} \rangle_{\partial D}:{\mathcal O}_F (D)  \times 
{\mathcal O} (\hat {\mathbb C} \setminus \overline D) 
\cap C^\infty ({\mathbb C} \setminus D)   \to {\mathbb C}.
\end{equation}
Again, taking into the account the topologies of the space and inequality \eqref{eq.pairing.est}, 
we may define continuous mapping
\begin{equation} \label{eq.mapping.F}
{\mathcal O} (\hat {\mathbb C} \setminus \overline D) 
\cap C^\infty ({\mathbb C} \setminus D) \ni v \to f_v \in ({\mathcal O}_F (D))^*,
\,\, f_v (u) = \langle u_{|\partial D} , v_{|\partial D} \rangle_{\partial D}.
\end{equation}
Its injectivity and surjectivity follow by the same arguments as in the proof of  Theorem \ref{t.compl.F}. 
Finally, the continuity of the inverse mapping follows from the Closed Graph Theotem for Fr\'echet-spaces, 
see \cite[Ch. 3, Theorem 2.3]{Shaef}. 
\end{proof}

Similarly, we obtain the following statement. 

\begin{theorem} \label{t.compl.FF}
Let $D$ be a bounded simply connected domain with $C^\infty$-smooth boundary. Then 
 we have a topological isomorphism: 
\begin{equation} \label{eq.dual.compl.FF}
({\mathcal O} (D)\cap C^\infty (\overline  D))^* \cong {\mathcal O}_F (\hat {\mathbb C} \setminus \overline D) .
\end{equation}
\end{theorem}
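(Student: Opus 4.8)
The plan is to mirror the proofs of Theorems \ref{t.compl.s} and \ref{t.compl.F}, interchanging the roles of the projective and the inductive limits. First I would record the two spaces as limits of the Hilbert scales already studied. By the Sobolev Embedding Theorem one has $C^\infty(\overline D)=\cap_{s\in\mathbb Z}H^s(D)$, and hence, exactly as in \eqref{eq.space.FF} for the exterior,
$$
{\mathcal O}(D)\cap C^\infty(\overline D)=\cap_{s\in\mathbb Z}{\mathcal O}^s(D),
$$
which I would endow with the projective limit topology of the Banach spaces ${\mathcal O}^s(D)$, making it a Fréchet space. Dually, Straube's theorem \cite{Str} applied to the exterior, in analogy with \eqref{eq.space.F}, gives
$$
{\mathcal O}_F(\hat{\mathbb C}\setminus\overline D)=\cup_{s\in\mathbb Z}{\mathcal O}^s(\hat{\mathbb C}\setminus\overline D),
$$
endowed with the inductive limit topology, so that it is a ${\rm DF}$-space.

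Next I would set up the pairing precisely as in \eqref{eq.pairing}. For $u\in{\mathcal O}(D)\cap C^\infty(\overline D)$ the boundary value $u_{|\partial D}$ lies in $C^\infty(\partial D)=\cap_{s'}H^{s'}(\partial D)$, while each $v\in{\mathcal O}_F(\hat{\mathbb C}\setminus\overline D)$ lies in some ${\mathcal O}^t(\hat{\mathbb C}\setminus\overline D)$, whence $v_{|\partial D}\in H^{t-1/2}(\partial D)$; taking $s'=1/2-t$ in \eqref{eq.pairing.est}, the expression $\langle u_{|\partial D},v_{|\partial D}\rangle_{\partial D}$ is well defined and, in view of the limit topologies, yields a continuous (conjugate-)linear map $v\mapsto f_v$ into $({\mathcal O}(D)\cap C^\infty(\overline D))^*$. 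Its injectivity and surjectivity I would obtain verbatim from the Cauchy-transform identities \eqref{eq.Cauchy.D}, \eqref{eq.Cauchy.compl} together with the weak jump theorem \cite{Ch} used for Theorem \ref{t.compl.s}: for each fixed $s$ the isomorphism \eqref{eq.dual.compl.s} supplies the representing function, and one checks that the transpose of the linking inclusion ${\mathcal O}^{s+1}(D)\hookrightarrow{\mathcal O}^s(D)$ corresponds, under \eqref{eq.dual.compl.s}, to the inclusion ${\mathcal O}^{1-s}(\hat{\mathbb C}\setminus\overline D)\hookrightarrow{\mathcal O}^{-s}(\hat{\mathbb C}\setminus\overline D)$, so that passing to the limit in $s$ identifies the bijection at the level of the limit spaces.

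The genuinely new point, and the main obstacle, is the topological identification: one must check that the strong dual topology on $({\mathcal O}(D)\cap C^\infty(\overline D))^*$ coincides with the inductive limit topology of ${\mathcal O}_F(\hat{\mathbb C}\setminus\overline D)$. Unlike Theorem \ref{t.compl.F}, where the dual was Fréchet and the Closed Graph Theorem for Fréchet spaces \cite{Shaef} applied at once, here both sides are ${\rm DF}$-spaces, for which such theorems are delicate. The cleanest way around this is to exploit compactness: since the spaces ${\mathcal O}^s(D)$ carry the boundary norms $\|\cdot\|_{H^{s-1/2}(\partial D)}$, the linking embeddings ${\mathcal O}^{s+1}(D)\hookrightarrow{\mathcal O}^s(D)$ are the compact embeddings $H^{s+1/2}(\partial D)\hookrightarrow H^{s-1/2}(\partial D)$ on the compact manifold $\partial D$ (Rellich). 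Thus ${\mathcal O}(D)\cap C^\infty(\overline D)$ is a Fréchet--Schwartz space, whose strong dual is the associated (Silva) inductive limit of the dual Banach spaces $({\mathcal O}^s(D))^*\cong{\mathcal O}^{1-s}(\hat{\mathbb C}\setminus\overline D)$ — and, by the previous paragraph, this inductive limit is precisely ${\mathcal O}_F(\hat{\mathbb C}\setminus\overline D)$. For the abstract duality to yield the concrete description I would verify that the spectrum is reduced, i.e. that ${\mathcal O}(D)\cap C^\infty(\overline D)$ is dense in each ${\mathcal O}^s(D)$; this density (e.g. by approximating holomorphic $H^{s-1/2}(\partial D)$ boundary values by smooth ones, or by polynomials for simply connected $D$) is the one technical fact requiring genuine care.

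Finally, as a consistency check, I would note that all the spaces involved are reflexive (being Fréchet--Schwartz and Silva spaces), so Theorem \ref{t.compl.FF} can also be read off from Theorem \ref{t.compl.F} by dualizing once more after interchanging the roles of $D$ and $\hat{\mathbb C}\setminus\overline D$ on the Riemann sphere; agreement of the two routes confirms the claimed topological isomorphism \eqref{eq.dual.compl.FF}.
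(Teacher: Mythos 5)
Your argument is essentially correct, and it is worth recording how it relates to the paper: the paper proves nothing here --- Theorem \ref{t.compl.FF} is accompanied only by the word ``Similarly,'' inviting the reader to transplant the proofs of Theorems \ref{t.compl.s} and \ref{t.compl.F}. Your treatment of the pairing, injectivity, and surjectivity is indeed that transplantation (Cauchy transform, weak jump formula, Hahn--Banach applied after factoring a continuous functional on the projective limit through some ${\mathcal O}^s(D)$). But at the topological step you correctly refuse to copy the paper: the proof of Theorem \ref{t.compl.F} rests on the Closed Graph Theorem for Fr\'echet spaces, which is not applicable here, since both $({\mathcal O}(D)\cap C^\infty(\overline D))^*$ and ${\mathcal O}_F(\hat{\mathbb C}\setminus\overline D)$ are ${\rm DF}$-spaces rather than Fr\'echet spaces. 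Your substitute --- the linking maps ${\mathcal O}^{s+1}(D)\hookrightarrow{\mathcal O}^s(D)$ are compact because, in the boundary norms, they are restrictions of the Rellich embeddings $H^{s+1/2}(\partial D)\hookrightarrow H^{s-1/2}(\partial D)$ to closed subspaces, so ${\mathcal O}(D)\cap C^\infty(\overline D)$ is Fr\'echet--Schwartz and its strong dual carries the Silva inductive-limit topology of the duals $({\mathcal O}^s(D))^*\cong{\mathcal O}^{1-s}(\hat{\mathbb C}\setminus\overline D)$ --- is the right repair, and it supplies exactly what the paper's ``similarly'' glosses over (one could alternatively invoke a closed graph theorem valid beyond the Fr\'echet category, e.g.\ for ultrabornological domains and webbed ranges, but that again requires establishing the Schwartz-type property you prove). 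The one point you flag, density of ${\mathcal O}(D)\cap C^\infty(\overline D)$ in each ${\mathcal O}^s(D)$ (reducedness of the spectrum, without which the Silva duality does not identify the steps with the concrete spaces ${\mathcal O}^{1-s}(\hat{\mathbb C}\setminus\overline D)$), is genuinely needed, but it can be closed with tools already in the paper: given $u\in{\mathcal O}^s(D)$, approximate $u_{|\partial D}$ in $H^{s-1/2}(\partial D)$ by $w_j\in C^\infty(\partial D)$; then $(Kw_j)^-\in{\mathcal O}(D)\cap C^\infty(\overline D)$, and the boundedness of the Cauchy transform on the boundary Sobolev scale \cite[\S 2.3.2.5]{RS}, together with \eqref{eq.Cauchy.D}, gives $(Kw_j)^-\to u$ in ${\mathcal O}^s(D)$.
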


\smallskip
{\bf Acknowlegement.} 
The investigation was supported 
by the Krasnoyarsk Mathematical Center and financed by the Ministry of Science and Higher 
Education of the Russian Federation (Agreement No. 075-02-2024-1429).

\end{document}